\newcommand{\ccushion}{{\text{\tiny{|}} c}}
\newcommand{\ctable}{{\underline{c}}}
\newcommand{\rvline}{\hspace*{-\arraycolsep}\vline\hspace*{-\arraycolsep}}
\newtheorem{theorem}{Theorem}
\newtheorem{lemma}{Lemma}
\newdefinition{definition}{Definition}
\newdefinition{remark}{Remark}
\newdefinition{example}{Example}
\newproof{proof}{Proof}
\journal{ }
\def\ps@pprintTitle{%
 \let\@oddhead\@empty
 \let\@evenhead\@empty
 \def\@oddfoot{}%
 \let\@evenfoot\@oddfoot}
\begin{document}

\begin{frontmatter}

\title{On generalized Coulomb-Amontons' law in the context of rigid body dynamics}

\author{A.S. Vaganian}
\ead{armay@yandex.ru}
\address{Herzen State Pedagogical University of Russia \\
48 Moyka Embankment (Naberezhnaya r. Moyki) St. Petersburg, 191186, Russia}

\begin{abstract}
    A generalization of Coulomb-Amontons' law of dry friction recently proposed by V.~V.~Kozlov
    is considered in the context of rigid body dynamics.
    Universal requirements for dry friction tensor formulated by V.~V.~Kozlov
    are complemented by a condition taking into account the contact nature of dry friction,
    and applied to several models.
    For the famous Painleve problem a generalized Coulomb-Amontons' force without singularities,
    yet such that the dissipation takes place only at the point of contact, is found.
    By the example of the motion of a rigid ball on a plane with a single point of contact,
    it is shown that these principles are consistent
    with the well-known equations, studied by G.-G.~Coriolis.
    Further, a ball simultaneously touching two perpendicular planes at two points of contact is considered.
    The corresponding equations of motion are derived and analyzed.
    An exact particular solution that describes a technique used in practice in billiards is obtained.
    It is shown that unlike the single contact case, the Lagrange multipliers can depend on friction coefficients.
    With this in mind, a generalization of the conditions on the tensor of dry friction for the case of
    an arbitrary number of contacts is proposed.
\end{abstract}

\begin{keyword}
    Dry friction\sep rigid body dynamics\sep Coulomb-Amontons' law\sep Painleve paradox
    \MSC[2010] 70F40
\end{keyword}

\end{frontmatter}


\section{Introduction} \label{sec1}

In article~\cite{Kozlov2010855} V.~V.~Kozlov suggested a generalization of Coulomb-Amontons' law of dry friction
for constrained Lagrangian systems that eliminates singularities in friction forces and constraint reactions
and applied it to an arbitrary motion of a rigid body on a fixed surface
in case, where at any moment the contact between the body and the surface takes place at exactly one point.
According to V.~V.~Kozlov, the generalized Coulomb-Amontons' force of dry friction
acting on a body moving on a surface given by an equation $f(x) = 0$, $x \in \mathbb{R}^n$,
is defined as follows (see~\cite[formula~(2.6)]{Kozlov2010855}):
\begin{equation} \label{eq1}
    F = - \abs{R} \frac{\Phi \dot{x}}{\abs{\dot{x}}},
\end{equation}
where we denote
\[
    R = \biggl\{ \lambda \frac{\partial f}{\partial x^i} \biggr\}, \quad
    |R|^2 = \lambda^2 \biggl(
        \frac{\partial f}{\partial x},
        A^{-1} \frac{\partial f}{\partial x}
    \biggr), \quad
    A = \biggl\{\frac{\partial^2 T}{\partial \dot{x}^i \partial \dot{x}^j}\biggr\}, \quad
    |\dot{x}|^2 = (A \dot{x}, \dot{x}).
\]
Here the parentheses denote convolution of a covector with a vector,
$R$ is the constraint reaction, $\lambda$ is the Lagrange multiplier,
$T$ is the kinetic energy of the system, and $\Phi = \{\Phi_{ij}\}$ is a matrix, for all $x$ and $t$
satisfying the following two conditions:
\begin{enumerate}[i.]
    \item \label{cond1}
    (see~\cite[formula~(2.9)]{Kozlov2010855})
    \[
        (\Phi^T A^{-1}) \frac{\partial f}{\partial x} = \rho \frac{\partial f}{\partial x},
        \quad \rho \in \mathbb{R}.
    \]
    This restriction means mutual orthogonality of the generalized forces of dry friction
    and normal support reaction in the metrics, defined by matrix $A$:
    \[\begin{split}
        \biggl(\frac{\partial f}{\partial x}, A^{-1} F\biggr)                                           &
        = - \frac{|R|}{|\dot{x}|} \biggl(\frac{\partial f}{\partial x}, A^{-1} \Phi \dot{x}\biggr)   \\ &
        = - \frac{|R|}{|\dot{x}|} \biggl(\Phi^T A^{-1} \frac{\partial f}{\partial x}, \dot{x}\biggr)
        = - \rho \frac{|R|}{|\dot{x}|} \biggl(\frac{\partial f}{\partial x}, \dot{x}\biggr)
        = 0.
    \end{split}\]
    The last equality is a consequence of the constraint equation.
    Also, as shown in~\cite{Kozlov2010855}, this condition implies that multiplier $\lambda$
    does not depend on $\Phi$ and can be found from the equations of motion without friction.
    \item \label{cond2}
    (see~\cite[formula~(2.8)]{Kozlov2010855})
    \[
        (\Phi \dot{x}, \dot{x}) \ge 0.
    \]
    This condition expresses dissipativity of the friction force:
    \[
        (F, \dot{x}) = - \frac{|R|}{|\dot{x}|} (\Phi \dot{x}, \dot{x}) \le 0.
    \]
\end{enumerate}
From a geometric point of view, matrices $A$ and $\Phi$ are components of twice covariant tensor fields
in the configuration space --- tensor of inertia and \emph{tensor of dry friction} accordingly,
which implies invariance of conditions~\ref{cond1} and~\ref{cond2} with respect to changes of
the generalized coordinates.

In the presented work some of the results from~\cite{Kozlov2010855} are refined under assumption
of absolute rigidity of the interacting bodies.
This assumption, as will be shown, leads to additional restrictions on matrix $\Phi$
and friction force $F$, and can be formulated in a simple universal form.
As a consequence, for the Painleve falling rod problem and the problem of a rigid body rolling on a fixed plane,
we obtain much simpler formulas for the tensor of dry friction than in~\cite[\S\S~5--6]{Kozlov2010855}.
It is worth noting, that rigidity of the considered objects is mentioned in the formulations of
these problems in~\cite{Kozlov2010855} as well, but it is not reflected in conditions~\ref{cond1}--\ref{cond2}
themselves, because of what, among others, the components responsible for rolling and pivoting friction
appear in the corresponding expressions.
This may lead to a contradiction when the point of application of these forces, which coincides with
the point of contact in case of rigid bodies, is still, and their power is non-zero.
In this regard, see also comments by V.~F.~Zhuravlev~\cite{Zhuravlev2011147}.
Next, we show a counterexample to the hypothesis introduced in~\cite[\S~7]{Kozlov2010855},
that in case of multiple points of contact as well, the Lagrange multipliers do not depend on friction
coefficients.
Along the way, we derive equations that describe a real technique used in practice in billiards.
Finally, we propose a generalization of condition~\ref{cond1} for the case of an arbitrary number of contacts.

\section{Painleve problem} \label{sec2}

\paragraph{Problem setting and support reaction in the absence of friction}

Consider a mechanical system consisting of two material points $M_1$ and $M_2$ of mass $m_1$ and $m_2$,
connected to each other by a weightless rigid rod of length $l$ (see Fig.~\ref{fig1}).
Line $Ox$ constrains the motion of point $M_1$ to half-plane $y \ge 0$.
\begin{figure}[!ht]
    \centering\includegraphics{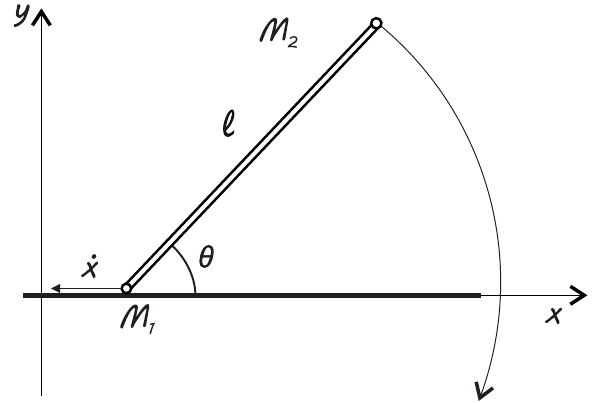}
    \caption{Painleve problem}
    \label{fig1}
\end{figure}
Lagrangian of the described system in generalized coordinates $x, \theta, y$ has the form
\[\begin{split}
    L & = m_1 \frac{\dot{x}^2 + \dot{y}^2}{2}
      + m_2 \frac{(\dot{x} - \dot{\theta} l \sin{\theta})^2 + (\dot{y} + \dot{\theta} l \cos{\theta})^2}{2} \\
      & - (m_1 + m_2) g y
      - m_2 g l \sin{\theta}
      + \lambda y,
\end{split}\]
where $x, y$ denote the Cartesian coordinates of point $M_1$,
$\theta$ is the angle between rod $M_1 M_2$ and axis $Ox$
(positive direction of the reference angle is counterclockwise),
$g$ is the acceleration of gravity, and $\lambda$ is the Lagrange multiplier.
Write down the equations of motion:
\begin{equation}\begin{alignedat}{3} \label{eq2}
    & (m_1 + m_2) \ddot{x} - l m_2 \ddot{\theta} \sin{\theta}
    && = l m_2 \dot{\theta}^2 \cos{\theta},                                     \\
    & l m_2 (\ddot{x} \sin{\theta} - \ddot{y} \cos{\theta} - l \ddot{\theta})
    && = l m_2 g \cos{\theta},                                                  \\
    & (m_1 + m_2) \ddot{y} + l m_2 \ddot{\theta} \cos{\theta}
    && = - (m_1 + m_2) g + l m_2 \dot{\theta}^2 \sin{\theta} + \lambda.         \\
\end{alignedat}\end{equation}
Depending on the value of $\lambda$, the motion of the system can be attributed to one of the following modes:
\begin{itemize}
    \item $\lambda = 0$, there is no contact between the rod and the support, i.e. $y > 0$.
    \item $\lambda \in (0, +\infty)$, there is a long-time contact of the rod with the support.
    In this case, letting $y$ together with derivatives equal to zero, we find
    \begin{equation} \label{eq3}
        \lambda = \frac{
            2 m_1 \bigl((m_1 + m_2) g - l m_2 \dot{\theta}^2 \sin{\theta}\bigr)
        }{
            2 m_1 + m_2 (1 + \cos{2 \theta})
        } > 0.
    \end{equation}
    \item $\lambda = +\infty$, there is an impact of the rod on the support,
    during which the generalized velocities undergo a discontinuity.
\end{itemize}

\paragraph{Painleve paradox}

The classical \emph{Coulomb-Amontons' law of dry friction} for the considered system is defined by expression
\begin{equation} \label{eq4}
    F = -\lambda\, \mu\, \sigma(\dot{x}), \qquad
    \sigma(x) = \begin{cases}
        -1,      & x < 0, \\
        [-1, 1], & x = 0, \\
        +1,      & x > 0, \\
    \end{cases}
\end{equation}
where $\mu > 0$ is the coefficient of friction, and $[-1, 1]$ on the right-hand side
means that $\sigma(0)$ can take any value from this range.
Following P.~Painleve (see~\cite[p.~16, formula~(d')]{Painleve1895141}),
let us add to the right-hand side of the Lagrange equation from system~\eqref{eq2}
corresponding to coordinate~$x$ force~$F$ to obtain a system with friction:
\begin{equation}\label{eq5}\begin{alignedat}{3}
    & (m_1 + m_2) \ddot{x} - l m_2 \ddot{\theta} \sin{\theta}
    && = l m_2 \dot{\theta}^2 \cos{\theta} + F,                                 \\
    & l m_2 (\ddot{x} \sin{\theta} - \ddot{y} \cos{\theta} - l \ddot{\theta})
    && = l m_2 g \cos{\theta},                                                  \\
    & (m_1 + m_2) \ddot{y} + l m_2 \ddot{\theta} \cos{\theta}
    && = - (m_1 + m_2) g + l m_2 \dot{\theta}^2 \sin{\theta} + \lambda,         \\
\end{alignedat}\end{equation}
Putting in these equations $y$ with derivatives equal to zero, we get
\[
    \lambda = \frac{
        2 m_1 \bigl((m_1 + m_2) g - l m_2 \dot{\theta}^2 \sin{\theta}\bigr)
    }{
        2 m_1 + m_2 (1 + \cos{2 \theta}) + \mu m_2 \sigma(\dot{x}) \sin{2 \theta}
    }.
\]
As can be seen, for sufficiently large values of the friction coefficient the denominator may turn to zero,
leading under a continuous contact to infinite values of the support reaction and friction forces,
which is devoid of physical sense.

\paragraph{V.~V.~Kozlov's approach to accounting for friction}

As noted in~\cite[\S5]{Kozlov2010855}, condition~\ref{cond1} is violated for system~\eqref{eq5},
which leads to singularities in $\lambda$.
Indeed, for the Painleve problem
\begin{equation} \label{eq6}
    A = \begin{pmatrix}
        m_1 + m_2               & - l m_2 \sin{\theta}  & 0                     \\
        - l m_2 \sin{\theta}    & m_2 l^2               & l m_2 \cos{\theta}    \\
        0                       & l m_2 \cos{\theta}    & m_1 + m_2             \\
    \end{pmatrix}, \quad
    f = y \ge 0
\end{equation}
so for $\Phi = \mathrm{diag}\{\varkappa, 0, 0\}$, we have
\[
    \Phi^T A^{-1} \begin{pmatrix}
        0 \\
        0 \\
        1 \\
    \end{pmatrix} = - \frac{ m_2 \varkappa \sin{2 \theta} }{ 2 m_1 (m_1 + m_2) } \begin{pmatrix}
        1 \\
        0 \\
        0 \\
    \end{pmatrix} \ne \rho \begin{pmatrix}
        0 \\
        0 \\
        1 \\
    \end{pmatrix}
\]
On the contrary, if condition~\ref{cond1} is satisfied, namely when (see~\cite[formulas~(5.1)--(5.2)]{Kozlov2010855})
\begin{equation} \label{eq7}
    \Phi = A \Omega^T, \quad
    \Omega = \begin{pmatrix}
        \varkappa_1 & \mu_1 & 0    \\
        \varkappa_2 & \mu_2 & 0    \\
        \varkappa_3 & \mu_3 & \nu  \\
    \end{pmatrix},
\end{equation}
then Coulomb-Amontons' dry friction force~\eqref{eq1} is uniquely defined by~\eqref{eq6}, \eqref{eq7} and~\eqref{eq3}
at any moment in time and any point of the phase space.

\paragraph{Insufficiency of conditions~\ref{cond1}-\ref{cond2} in case of absolutely rigid bodies}

Let in~\eqref{eq7} $\Omega = \nu E_3$, $\nu > 0$, where $E_3$ is the $3 \times 3$ identity matrix,
which corresponds to an isotropic tensor of dry friction.
In this case both conditions~\ref{cond1}-\ref{cond2} are met, and friction force~\eqref{eq1}
for $y = 0$ and $\lambda$ from~\eqref{eq3} is
\begin{equation} \label{eq8}
    F = - k \nu \begin{pmatrix}
        (m_1 + m_2) \dot{x} - l m_2 \dot{\theta} \sin{\theta} \\
        l m_2 (l \dot{\theta} - \dot{x} \sin{\theta})         \\
        l m_2 \dot{\theta} \cos{\theta}                       \\
    \end{pmatrix},
\end{equation}
where
\[
    k = \frac{
        \bigl((m_1 \! + \! m_2) g - l m_2 \dot{\theta}^2 \sin{\theta}\bigr) \sqrt{2 m_1}
    }{
        \sqrt{
            \! (m_1 \! + \! m_2) \bigl(2 m_1 \! + \! m_2 (1 + \cos{2 \theta})\bigr)
            \bigl(
                (m_1 \! + \! m_2 \cos^2{\theta}) \dot{x}^2 \! + \!
                m_2 (\dot{x} \sin{\theta} - l \dot{\theta})^2
            \bigr)
        }
    }
\]
Denote the vector of generalized velocities of the body by
\[
    v = (\dot{x}, \dot{\theta}, \dot{y})^T.
\]
The power dissipated by friction force~\eqref{eq8} subject to $y = 0$ equals to
\[
    (F, v) = - k \nu \bigl(
        (m_1 + m_2 \cos^2{\theta}) \dot{x}^2 +
        m_2 (\dot{x} \sin{\theta} - l \dot{\theta})^2
    \bigr) \le 0.
\]
On the other hand, the power of the same force dissipated at the point of contact equals to
\[
    F_x \dot{x} = - k \nu \bigl((m_1 + m_2) \dot{x}^2 - l m_2 \dot{x} \dot{\theta} \sin{\theta}\bigr).
\]
Difference between these expressions indicates that the work of friction force~\eqref{eq8}
is not concentrated in the point of contact.
Besides, the expression for the power dissipated at the point of contact is not sign-definite.
Thus, conditions~\ref{cond1}-\ref{cond2} alone turn out to be insufficient for a correct definition
of the dry friction in case of a motion of an absolutely rigid body on an undeformable surface.

\paragraph{Condition of contact interaction}

Denote by $P$ a linear operator that maps an $n$-dimensional vector $v$ of generalized velocities
of the system to an $r$-dimensional vector $v_c$ ($r \le n$) of the velocity of the body at the point of contact
with the supporting surface:
\begin{equation} \label{eq9}
    P v = v_c.
\end{equation}
By definition, operator $P$ is surjective. In particular, matrix $P P^T$ is invertible.

\begin{lemma} \label{lm1}
    For an absolutely rigid body interacting with support at a single point of contact,
    \begin{equation} \label{eq10}
        F = P^T F_c,
    \end{equation}
    where $F_c$ is a Coulomb-Amontons' friction force at the point of contact,
    and $F$ is the corresponding generalized friction force.
\end{lemma}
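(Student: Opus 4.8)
The plan is to derive \eqref{eq10} from the elementary definition of a generalized force, absolute rigidity being needed only to identify the mechanical nature of the friction interaction.

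First I would argue that, for an absolutely rigid body touching the support at a single geometric point, the entire friction action of the support on the body reduces to one force $F_c$ applied at that point. A genuinely distributed load over the contact region --- and with it any resultant friction couple, i.e.\ the rolling and pivoting components --- requires a contact area of positive measure, which is excluded in the rigid model, where the contact patch is a point; moreover, as already observed in Section~\ref{sec1}, a couple applied at a point that is instantaneously at rest relative to the support would have non-zero power while performing no work on the actual displacement, which is mechanically inadmissible. Hence the friction interaction is exhausted by the single force $F_c$ (which, when the body slides, is in addition directed opposite to the slip velocity $v_c$, though this will not be used).

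Next I would invoke the standard rule for the generalized force produced by a force applied at a point. Since the supporting surface is fixed and the point of contact is a material point of the body, its position in the inertial frame is a function $r_c = r_c(q)$ of the generalized coordinates $q = (q^1, \dots, q^n)$ alone; differentiating along the motion gives $v_c = (\partial r_c/\partial q)\,v$, so by \eqref{eq9} the Jacobian matrix $\partial r_c/\partial q$ coincides with $P$. The generalized force corresponding to $F_c$ has components $F_i = F_c \cdot \partial r_c/\partial q^i$, that is $F = (\partial r_c/\partial q)^T F_c = P^T F_c$; equivalently, equating elementary works, $(F, \delta q) = (F_c, \delta r_c) = (F_c, P\,\delta q) = (P^T F_c, \delta q)$ for every virtual displacement $\delta q$, whence $F = P^T F_c$. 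Uniqueness of $F_c$ is then immediate from surjectivity of $P$, namely $F_c = (P P^T)^{-1} P F$.

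The only delicate point is the first step --- the reduction of the friction interaction to a single contact force, which is precisely where the hypothesis of absolute rigidity is essential. Once this is granted, the remainder is the routine, coordinate-free transformation law for generalized forces.
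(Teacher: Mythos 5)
Your proof is correct and follows essentially the same route as the paper: the paper's one-line argument is precisely the power identity $(F,v)=(F_c,v_c)$ combined with \eqref{eq9}, which is your virtual-work computation $(F,\delta q)=(F_c,P\,\delta q)=(P^T F_c,\delta q)$. Your preliminary discussion of why rigidity excludes friction couples is a fuller justification of the premise the paper simply asserts, but it does not change the method.
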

\begin{proof}
    Under the lemma conditions, all power of the friction force must be dissipated at the point of contact,
    in other words $(F, v) = (F_c, v_c)$.
    From here, taking into account~\eqref{eq9} follows formula~\eqref{eq10}.
\qed\end{proof}

As a consequence from the proved lemma, we obtain a supplementary to conditions~\ref{cond1}--\ref{cond2}
constraint on matrix $\Phi$ from~\eqref{eq1}:
\begin{enumerate}[i.]
    \setcounter{enumi}{2}
    \item \label{cond3}
    \[
        \Phi = P^T \Psi.
    \]
\end{enumerate}

\begin{lemma} \label{lm2}
    Conditions~\ref{cond2} and~\ref{cond3} are equivalent to equality
    \begin{equation} \label{eq11}
        \Phi = P^T \Phi_c P,
    \end{equation}
    where $\Phi_c$ is a non-negative-definite $r \times r$ matrix.
\end{lemma}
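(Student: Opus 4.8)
The plan is to prove the two implications in~\eqref{eq11} separately, the main tool being the right inverse $P^{+} := P^{T}(PP^{T})^{-1}$ of $P$, which is well defined because $PP^{T}$ is invertible; it satisfies $PP^{+} = E_r$, and $P^{+}P$ is the orthogonal projection onto $(\ker P)^{\perp}$. Throughout I read condition~\ref{cond2} as a requirement on the tensor $\Phi$ itself, i.e. $(\Phi v,v)\ge 0$ for every vector $v$ of generalized velocities.

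The implication ``$\Leftarrow$'' is immediate. If $\Phi = P^{T}\Phi_c P$ with $\Phi_c$ non-negative-definite, then condition~\ref{cond3} holds with $\Psi = \Phi_c P$, and for every $v$ one has $(\Phi v, v) = (\Phi_c P v, P v) \ge 0$, which is condition~\ref{cond2}.

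For ``$\Rightarrow$'', assume conditions~\ref{cond2} and~\ref{cond3}, so that $\Phi = P^{T}\Psi$ for some $r\times n$ matrix $\Psi$. The key step is to show that $\Psi$ annihilates $\ker P$: given $u \in \ker P$ and $w\in\mathbb{R}^{r}$, apply condition~\ref{cond2} to the family $v_t = t u + P^{+}w$; since $P v_t = w$ is independent of $t$, we get $(\Phi v_t, v_t) = (\Psi v_t, P v_t) = t(\Psi u, w) + (\Psi P^{+}w, w)$, an affine function of $t$ which is non-negative for all $t \in \mathbb{R}$ and hence constant, so $(\Psi u, w) = 0$; as $w$ is arbitrary, $\Psi u = 0$. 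It follows that $\Psi = \Psi P^{+}P$ (because $E_n - P^{+}P$ maps into $\ker P$), so that, putting $\Phi_c := \Psi P^{+}$, we obtain $\Phi = P^{T}\Psi = P^{T}\Phi_c P$, i.e.~\eqref{eq11}. Finally $\Phi_c$ is non-negative-definite, since for every $w\in\mathbb{R}^{r}$ one has $(\Phi_c w, w) = (\Phi P^{+}w, P^{+}w) \ge 0$ by condition~\ref{cond2}.

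The only non-routine point is the extraction of $\Psi u = 0$ from dissipativity; this is precisely where condition~\ref{cond2} upgrades the one-sided factorization $\Phi = P^{T}\Psi$ coming from condition~\ref{cond3} to the two-sided form~\eqref{eq11}. Everything else is linear algebra with $P^{+}$, using surjectivity of $P$ only through invertibility of $PP^{T}$. I would also note that no symmetry of $\Phi$ or of $\Phi_c$ is used or asserted: ``non-negative-definite'' is meant in the sense $(\Phi_c w, w)\ge 0$ for all $w$.
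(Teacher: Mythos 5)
Your proof is correct and follows essentially the same route as the paper: the paper likewise shows that dissipativity forces $\ker P$ into the kernel of the friction tensor by observing that a non-negative function affine in a scalar parameter cannot have a nonzero linear term, and then combines this with condition~\ref{cond3} to obtain the factorization~\eqref{eq11} and the non-negativity of $\Phi_c$ from surjectivity of $P$. Your use of the right inverse $P^{+}$ merely makes explicit the paper's step ``$\Phi=\Xi P$, which together with condition~\ref{cond3} implies $\Phi=P^{T}\Phi_c P$.''
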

\begin{proof}
    Let conditions~\ref{cond2}--\ref{cond3} be met.
    Let us show that $\ker P \subset \ker \Phi \subset \mathbb{R}^n$.
    By contradiction, assume that there exists $\xi$ such that $P \xi = 0$, but $\Phi \xi \ne 0$.
    Then for arbitrary $u \in \mathbb{R}^n$ and $\alpha \in \mathbb{R}$
    \[
        0 \le \bigl(\Phi (u + \alpha \xi), u + \alpha \xi\bigr) =
        u^T \Phi (u + \alpha \xi) =
        u^T \Phi u + \alpha u^T \Phi \xi.
    \]
    But for $u = \Phi \xi$ and sufficiently large in absolute value negative $\alpha$
    the last expression is negative, which leads to a contradiction.
    Hence, $\Phi = \Xi P$, which together with condition~\ref{cond3} implies $\Phi = P^T \Phi_c P$.
    Non-negative-definiteness of $\Phi_c$ follows from~\eqref{eq9} and condition~\ref{cond2}:
    \[
        (\Phi_c v_c, v_c) = (\Phi_c P v, P v) = (P^T \Phi_c P v, v) = (\Phi v, v) \ge 0.
    \]

    Reverse implication is obvious.
\qed\end{proof}

Vector of generalized support reaction $R$ is an image of $P^T$ of normal support reaction $N$
at the point of contact, i.e. $R = P^T N$.
Consequently, the norm of $R$ in formula~\eqref{eq1} can be replaced with the Euclidean length of $N$,
and multiplier $|R| / |N| > 0$ that depends only on generalized coordinates can be included
in not yet defined tensor $\Phi_c$.
At the same time, the norm of projection of vector of generalized velocities $\dot{x}$
onto the point of contact coincides with the Euclidean length of the velocity of this point.
The latter follows from the fact that in the metrics defined by matrix $A$,
the square of length of the specified projection of generalized velocity equals to
the doubled kinetic energy of a material point, proportional to the Euclidean square of velocity.
With that said, holds

\begin{theorem} \label{th1}
    In case of an absolutely rigid body and a single point of contact,
    equality~\eqref{eq1} can be rewritten as
    \begin{equation} \label{eq12}
        F = - |N| \frac{P^T \Phi_c v_c}{|v_c|},
    \end{equation}
    where $|N|$ and $|v_c|$ are the Euclidean lengths of vectors of the force of normal support reaction
    and the velocity of the body at the point of contact, and $\Phi_c$ is a non-negative-definite matrix
    such that
    \begin{equation} \label{eq13}
        \Phi_c^T Q N = \rho N, \qquad Q = P A^{-1} P^T, \quad \rho \in \mathbb{R}.
    \end{equation}
\end{theorem}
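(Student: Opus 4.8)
The plan is to combine Lemma~\ref{lm2} with the two identities already assembled in the paragraph preceding the theorem, reducing everything to an algebraic substitution into~\eqref{eq1}. First I would write out the three substitutions separately. From Lemma~\ref{lm2} we have $\Phi = P^T \Phi_c P$ for a non-negative-definite $r\times r$ matrix $\Phi_c$, so $\Phi\dot{x} = P^T\Phi_c P\dot{x} = P^T\Phi_c v_c$ by~\eqref{eq9}; this turns the numerator of~\eqref{eq1} into $P^T\Phi_c v_c$. Next, using $R = P^T N$ I would observe that $|R|^2 = \lambda^2(\partial f/\partial x, A^{-1}\partial f/\partial x)$ and, since the covector $\lambda\,\partial f/\partial x$ equals $R = P^T N$, we get $|R|^2 = (P^T N, A^{-1} P^T N) = (N, P A^{-1} P^T N) = (N, Q N)$; this is positive, $Q$ being symmetric positive semidefinite and $P$ surjective, so $|R|/|N|$ is a strictly positive scalar depending only on generalized coordinates. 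Finally, I would record the kinetic-energy argument for the denominator: the $A$-length of the projection of $\dot x$ onto the contact point equals the Euclidean length of $v_c$, i.e. $|\dot x|$ in~\eqref{eq1} equals $|v_c|$ — more precisely, $|\dot{x}|^2 = (A\dot x,\dot x)$ restricted appropriately gives $|v_c|^2$, as argued in the text using that $(A\dot x,\dot x)$ for the contact-point part is twice the kinetic energy of that material point, hence $|v_c|^2$ in the Euclidean sense.

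Putting the three substitutions into~\eqref{eq1} gives $F = -|R|\,\frac{P^T\Phi_c v_c}{|v_c|}$, and then I would absorb the positive coordinate-dependent factor $|R|/|N|$ into the still-undetermined matrix $\Phi_c$ (replacing $\Phi_c$ by $(|R|/|N|)\Phi_c$, which stays non-negative-definite), yielding exactly~\eqref{eq12} with $|N|$ in place of $|R|$. That handles the representation formula; what remains is to translate condition~\ref{cond1} into~\eqref{eq13}. Here I would start from $(\Phi^T A^{-1})\partial f/\partial x = \rho\,\partial f/\partial x$, substitute $\Phi = P^T\Phi_c P$ to get $P^T\Phi_c^T P A^{-1}\partial f/\partial x = \rho\,\partial f/\partial x$, then use $\partial f/\partial x = R/\lambda = P^T N/\lambda$ on both sides. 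The left side becomes $P^T\Phi_c^T P A^{-1} P^T N/\lambda = P^T\Phi_c^T Q N/\lambda$, the right side is $\rho P^T N/\lambda$, so $P^T(\Phi_c^T Q N - \rho N) = 0$. Since $P$ is surjective, $P^T$ is injective, hence $\Phi_c^T Q N = \rho N$, which is~\eqref{eq13}; the converse is immediate by applying $P^T$ on the left.

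The main obstacle — really the only nontrivial point — is being careful about the denominator identity $|\dot x| = |v_c|$: the vector $\dot x$ has components outside the range of directions seen by $P$, yet the friction force in Kozlov's formula uses the full $A$-norm $|\dot x|$, not just the norm of $v_c$. The resolution the paper appeals to is that, for a rigid body with a single contact point, the generalized velocity that actually enters the dissipation is precisely the projection onto the contact point (this is the content of Lemma~\ref{lm1}, $F = P^T F_c$, together with the geometric remark that the $A$-metric restricted to that projection is the Euclidean metric on the physical velocity $v_c$), so effectively $|\dot x|$ in~\eqref{eq12} should be read as the $A$-length of that projection, which equals $|v_c|_{\mathrm{Eucl}}$. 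I would make this explicit by noting that $\Phi\dot x = P^T\Phi_c v_c$ depends on $\dot x$ only through $v_c$, and that replacing $|\dot x|$ by $|v_c|$ amounts to the same coordinate-dependent rescaling that was already absorbed into $\Phi_c$; so no generality is lost. Everything else is substitution and the surjectivity of $P$, already noted in the text right after~\eqref{eq9}.
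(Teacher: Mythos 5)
Your proof follows the paper's own route essentially step for step: substitute $\Phi = P^T\Phi_c P$ from Lemma~\ref{lm2} into~\eqref{eq1}, absorb the coordinate-dependent factor $|R|/|N|$ into the still-undetermined $\Phi_c$, read the denominator as the $A$-norm of the projection of $\dot{x}$ onto the contact point (which equals the Euclidean $|v_c|$), and obtain~\eqref{eq13} from condition~\ref{cond1} via $R = P^T N$ and the injectivity of $P^T$. One small caveat: your secondary justification that replacing $|\dot{x}|$ by $|v_c|$ is ``the same coordinate-dependent rescaling already absorbed into $\Phi_c$'' is not literally correct, since the ratio $|\dot{x}|/|v_c|$ depends on the velocities and not only on the coordinates; the argument that actually carries this step --- and which you also state --- is the paper's, namely that for a contact force the proper normalization is the contact-point speed, i.e.\ the $A$-length of the projection of $\dot{x}$, rather than the full $A$-norm $|\dot{x}|$.
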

\begin{proof}
    Equality~\eqref{eq12} follows from~\eqref{eq1} by substitution~\eqref{eq11} subject to~\eqref{eq9}.
    In order to prove formula~\eqref{eq13}, write down condition~\ref{cond1} in the form
    \[
        (\Phi^T A^{-1}) R = \rho R.
    \]
    By substituting here $R = P^T N$ and~\eqref{eq11}, we get equality
    \[
        P^T \Phi_c^T P A^{-1} P^T N = \rho P^T N, \qquad \rho \in \mathbb{R},
    \]
    equivalent to~\eqref{eq13}.
\qed\end{proof}

\paragraph{Generalized dry friction force for the Painleve problem}

Let us find the general view of friction force $F$ representable in the form~\eqref{eq12} for the Painleve problem.
Herewith,
\[
    v_c = \begin{pmatrix}
        \dot{x} \\
        \dot{y} \\
    \end{pmatrix}, \quad
    N = \begin{pmatrix}
        0       \\
        \lambda \\
    \end{pmatrix}, \quad
    P = \begin{pmatrix}
        1 & 0 & 0 \\
        0 & 0 & 1 \\
    \end{pmatrix}.
\]
From~\eqref{eq6} we get
\[
    Q = \frac{1}{2 m_1 (m_1 + m_2)} \begin{pmatrix}
        2 m_1 + m_2 (1 - \cos{2 \theta}) & - m_2 \sin{2 \theta}             \\
        - m_2 \sin{2 \theta}             & 2 m_1 + m_2 (1 + \cos{2 \theta}) \\
    \end{pmatrix},
\]
and from~\eqref{eq13} it follows that
\begin{equation} \label{eq14}
    \Phi_c = \begin{pmatrix}
        \varkappa_1                             & \varkappa_2 \\
        \dfrac{
            \varkappa_1 m_2 \sin{2 \theta}
        }{
            2 m_1 + m_2 (1 + \cos{2 \theta})
        }                                       & \mu_2       \\
    \end{pmatrix} \qquad (\Phi_c \ge 0).
\end{equation}
The corresponding Lagrange equations taking into account dry friction take the form
\begin{equation}\begin{alignedat}{3} \label{eq15}
    & (m_1 + m_2) \ddot{x} - l m_2 \ddot{\theta} \sin{\theta} && = l m_2 \dot{\theta}^2 \cos{\theta} -
    \smash[b]{ \lambda \frac{
        \varkappa_1 \dot{x} + \varkappa_2 \dot{y}
    }{
        \sqrt{\dot{x}^2 + \dot{y}^2}
    } }, \\
    & l m_2 (\ddot{x} \sin{\theta} - \ddot{y} \cos{\theta} - l \ddot{\theta}) && = l m_2 g \cos{\theta}, \\
    & (m_1 + m_2) \ddot{y} + l m_2 \ddot{\theta} \cos{\theta} && =
    - (m_1 + m_2) g + l m_2 \dot{\theta}^2 \sin{\theta} + \lambda \\
    &&& - \frac{\lambda}{\sqrt{\dot{x}^2 + \dot{y}^2}} \biggl(
        \mu_2 \dot{y} + \frac{
            \varkappa_1 m_2 \dot{x} \sin{2 \theta}
        }{
            2 m_1 + m_2 (1 + \cos{2 \theta})
        }
    \biggr),
\end{alignedat}\end{equation}
System~\eqref{eq15} for different $\lambda$ describes all possible movements of the rod in the Painleve problem.
By setting in this system $\lambda = 0$, we obtain the equations of motion of the rod in the absence of contact.
In turn, letting $y$ with derivatives equal to zero, we get $\lambda$ same as in~\eqref{eq3},
and the equations of motion under continuous contact:
\[\begin{alignedat}{3}
    &&& (m_1 + m_2) \ddot{x} - l m_2 \ddot{\theta} \sin{\theta} = l m_2 \dot{\theta}^2 \cos{\theta} -
    \lambda \varkappa_1 \sigma(\dot{x}), \\
    &&& l m_2 (\ddot{x} \sin{\theta} - l \ddot{\theta}) = l m_2 g \cos{\theta},
\end{alignedat}\]
where the function $\sigma$ is defined in~\eqref{eq4}.
The latter system covers two cases.
The first case takes place when $\dot{x} \ne 0$ and corresponds to a motion with sliding friction.
The second one, where $\dot{x} = 0$, corresponds to rest of the contact point with force of static friction
\[
    F = \lambda \sigma_0 \varkappa_1 = m_2 \cos{\theta} (l \dot{\theta}^2 - g \sin{\theta}),
    \quad
    \sigma_0 = \sigma(0) \in [-1, 1].
\]
Hence the rest condition of the contact point is given by inequality
\[
    |\cos{\theta} (l \dot{\theta}^2 - g \sin{\theta})| \le \frac{\lambda \varkappa_1}{m_2}.
\]
When equality is reached in this formula, rest turns into a slide.

Instant contact with the support (impact) corresponds to $\lambda = + \infty$.
Formally, the equations describing velocity discontinuities can be obtained
by integrating~\eqref{eq15} over an infinitesimal time interval containing the moment of impact,
\[\begin{alignedat}{3}
    & (m_1 + m_2) \Delta{\dot{x}} - l m_2 \Delta{\dot{\theta}} \sin{\theta} && =
    - \varkappa_1 S_x - \varkappa_2 S_y, \\
    & l m_2 (\Delta{\dot{x}} \sin{\theta} - \Delta{\dot{y}} \cos{\theta} - l \Delta{\dot{\theta}}) && = 0, \\
    & (m_1 + m_2) \Delta{\dot{y}} + l m_2 \Delta{\dot{\theta}} \cos{\theta} && =
    I - \mu_2 S_y - \smash[t]{ \frac{
        \varkappa_1 m_2 S_x \sin{2 \theta}
    }{
        2 m_1 + m_2 (1 + \cos{2 \theta})
    } },
\end{alignedat}\]
where $I$ is the impulse of the support reaction force, and $S_x$ and $S_y$ are tangent and normal components
of the impulses of the impact force of friction.
Note that the coefficients $\varkappa_2$ and $\mu_2$ only participate in the equations describing the friction
for impact.

\section{Rigid ball on a plane} \label{sec3}

Consider the problem of the motion of a rigid homogeneous ball of mass $m$ and radius $R$
on a fixed horizontal plane $z = 0$, and compare results with~\cite[\S~6]{Kozlov2010855}.
As generalized coordinates, we select coordinates of the center of the ball $x$, $y$, $z$
and Euler angles relative to the center of the ball $\alpha$, $\beta$, $\gamma$.
The Lagrangian of this problem has the form
\begin{equation} \label{eq16}
    L = m \frac{\dot{x}^2 + \dot{y}^2 + \dot{z}^2}{2}
      + \frac{m R^2}{5} (\dot{\alpha}^2 + \dot{\beta}^2 + \dot{\gamma}^2 + 2 \dot{\alpha} \dot{\gamma} \cos{\beta})
      - m g z
      + \lambda z,
\end{equation}
where $\lambda$ is the Lagrange multiplier, and $f = z - R \ge 0$ is a unilateral constraint.
Thus
\begin{equation} \label{eq17}
    A = \begin{pmatrix}
        & m E_3         & \rvline & 0                        \\
        \hline
        & 0    & \rvline & \frac{2 m R^2}{5} \begin{pmatrix}
                            1           & 0 & \cos{\beta} \\
                            0           & 1 & 0           \\
                            \cos{\beta} & 0 & 1           \\
                          \end{pmatrix}                      \\
        \end{pmatrix}.
\end{equation}
Velocity of the point of the ball touching the surface and the normal support reaction are
\begin{equation} \label{eq18}
    v_c = \begin{pmatrix}
        \dot{x} - \dot{\beta} R \sin{\alpha} + \dot{\gamma} R \cos{\alpha} \sin{\beta} \\
        \dot{y} - \dot{\beta} R \cos{\alpha} - \dot{\gamma} R \sin{\alpha} \sin{\beta} \\
        \dot{z}                                                                        \\
    \end{pmatrix}, \quad
    N = \begin{pmatrix}
        0       \\
        0       \\
        \lambda \\
    \end{pmatrix},
\end{equation}
from where and from~\eqref{eq13}
\begin{equation}\begin{split} \label{eq19}
    P & = \begin{pmatrix}
        1 & 0 & 0 & 0 & - R \sin{\alpha}    & R \cos{\alpha} \sin{\beta}    \\
        0 & 1 & 0 & 0 & - R \cos{\alpha}    & - R \sin{\alpha} \sin{\beta}  \\
        0 & 0 & 1 & 0 & 0                   & 0                             \\
    \end{pmatrix}, \\
    Q & = \frac{1}{m} \begin{pmatrix}
        7 / 2 & 0     & 0 \\
        0     & 7 / 2 & 0 \\
        0     & 0     & 1 \\
    \end{pmatrix}, \quad
    \Phi_c = \begin{pmatrix}
        \varkappa_1 & \varkappa_2 & \varkappa_3 \\
        \mu_1       & \mu_2       & \mu_3       \\
        0           & 0           & \nu         \\
    \end{pmatrix}.
\end{split}\end{equation}
When written in terms of angular velocity vector $\omega = (\omega_x, \omega_y, \omega_z)^T$ defined by
\begin{equation}\begin{split} \label{eq20}
    \dot{\alpha} & = - \omega_z + \cot{\beta} (\omega_x \sin{\alpha} + \omega_y \cos{\alpha}), \\
    \dot{\beta}  & = - \omega_x \cos{\alpha} + \omega_y \sin{\alpha},                          \\
    \dot{\gamma} & = - \csc{\beta} (\omega_x \sin{\alpha} + \omega_y \cos{\alpha}),
\end{split}\end{equation}
instead of the Euler angles, the equations of motion look especially simple:
\begin{equation}\label{eq21}
    \begin{aligned}
        m \ddot{x} & = \frac{
            - \lambda \bigl(\varkappa_1 (\dot{x} - R \, \omega_y) + \varkappa_2 (\dot{y} + R \, \omega_x) + \varkappa_3 \dot{z}\bigr)
        }{
            \sqrt{(\dot{x} - R \, \omega_y)^2 + (\dot{y} + R \, \omega_x)^2 + \dot{z}^2}
        }, \\
        m \ddot{y} & = \frac{
            - \lambda \bigl(\mu_1 (\dot{x} - R \, \omega_y) + \mu_2 (\dot{y} + R \, \omega_x) + \mu_3 \dot{z}\bigr)
        }{
            \sqrt{(\dot{x} - R \, \omega_y)^2 + (\dot{y} + R \, \omega_x)^2 + \dot{z}^2}
        }, \\
        m \ddot{z} & = \lambda - m g - \frac{
            \lambda \nu \dot{z}
        }{
            \sqrt{(\dot{x} - R \, \omega_y)^2 + (\dot{y} + R \, \omega_x)^2 + \dot{z}^2}
        }, \\
        \frac{2}{5} m R \, \dot{\omega}_x & = \frac{
            - \lambda \bigl(\mu_1 (\dot{x} - R \, \omega_y) + \mu_2 (\dot{y} + R \, \omega_x) + \mu_3 \dot{z}\bigr)
        }{
            \sqrt{(\dot{x} - R \, \omega_y)^2 + (\dot{y} + R \, \omega_x)^2 + \dot{z}^2}
        }, \\
        \frac{2}{5} m R \, \dot{\omega}_y & = \frac{
            \lambda \bigl(\varkappa_1 (\dot{x} - R \, \omega_y) + \varkappa_2 (\dot{y} + R \, \omega_x) + \varkappa_3 \dot{z}\bigr)
        }{
            \sqrt{(\dot{x} - R \, \omega_y)^2 + (\dot{y} + R \, \omega_x)^2 + \dot{z}^2}
        }, \\
        \frac{2}{5} m R \, \dot{\omega}_z & = 0.
    \end{aligned}
\end{equation}
Here depending on the value of $\lambda$ the motion of the system can be attributed to one of the following modes:
\begin{itemize}
    \item $\lambda = 0$, there is no contact between the ball and the surface, i.e. $z > R$.
    \item $\lambda \in (0, +\infty)$ corresponds to a motion of the ball on the surface.
    By setting $z = R$, and its derivatives equal to zero, from the third equation we get
    \[
        \lambda = m g.
    \]
    The obtained system is well-known and was studied among others by G.-G.~Coriolis in his famous
    book~\cite{Coriolis1835147}.
    Particularly, the quantities
    \[
        \dot{x} + \frac{2 R}{5} \omega_y, \quad
        \dot{y} - \frac{2 R}{5} \omega_x
    \]
    are first integrals of this system and represent velocity components
    of the so-called \emph{upper center of percussion of the ball},
    i.e. the point of the ball, located at distance $2 / 5$ of the radius vertically above
    the center of the ball (see Fig.~\ref{fig2}).
    \item $\lambda = +\infty$ corresponds to an impact of the ball on the plane,
    when velocity components of the center of the ball and components of the angular velocity suffer a discontinuity.
    At the same time, the horizontal velocity components of the upper center of percussion are preserved.
\end{itemize}

\begin{figure}[!ht]
    \centering\includegraphics{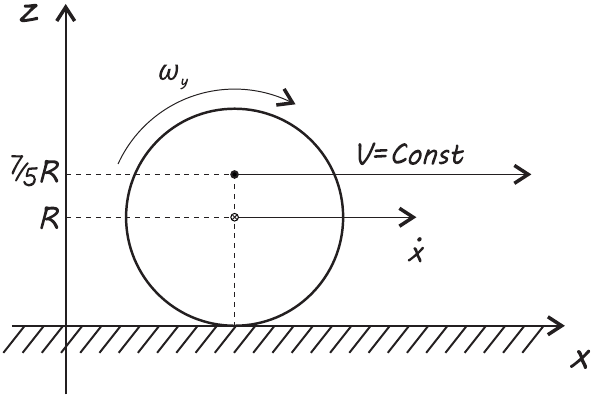}
    \caption{The upper center of percussion}
    \label{fig2}
\end{figure}

The problem of rolling of a heavy homogeneous ball on a horizontal plane was also considered by V.~V.~Kozlov
in~\cite[\S~6]{Kozlov2010855} subject to conditions~\ref{cond1} and~\ref{cond2} only.
The sliding friction force obtained in~\cite{Kozlov2010855}, generally speaking, depends on components
of the angular velocity, that correspond to pivoting and rolling.
Besides, in~\cite{Kozlov2010855} there arise additional moments of forces of rolling and pivoting friction,
which indicates the non-contact character of such a generalized force of dry friction.
In equations~\eqref{eq21} such terms are, obviously, absent.

\section{Motion of a rigid ball in contact with two perpendicular planes} \label{sec4}

In~\cite[\S~7]{Kozlov2010855} conditions~\ref{cond1}--\ref{cond2} are generalized to the case of an arbitrary finite
number of points of contact, which is equivalent to an arbitrary number of constraints (see~\cite[formula~(7.1)]{Kozlov2010855}):
\begin{equation} \label{eq22}
    f_1(x) = 0, \quad \ldots, \quad f_p(x) = 0, \qquad p < n.
\end{equation}
Covectors $\partial f_i/ \partial x$ are assumed linearly independent.
Condition~\ref{cond2} is transferred to this case without changes,
and instead of condition~\ref{cond1}, it is suggested (see~\cite[formula~(7.4)]{Kozlov2010855})
\begin{equation} \label{eq23}
    (\Phi^T A^{-1}) \frac{\partial f_i}{\partial x} = \sum_{ j = 1 }^p c_{i j} \frac{\partial f_j}{\partial x},
    \qquad c_{i j} \in \mathbb R.
\end{equation}
If this condition is met the Lagrange multipliers $\lambda_1, \ldots, \lambda_p$ are uniquely defined,
and as in the case of a single point of contact, they do not depend on $\Phi$.
The following example demonstrates that this requirement is too restrictive.
In fact, in problems with several points of contacts,
we should consider multiple tensors $\Phi_1, \ldots, \Phi_p$, one for each point of contact,
and the Lagrange multipliers can depend on coefficients of friction.

\paragraph{Equations of motion of the ball along the cushion}

Consider equations of motion of a ball on a horizontal half-plane $z = 0$, $y \ge 0$,
bounded by a vertical plane $y = 0$.
Further, these planes will be referred to as \emph{the table} and \emph{the cushion}.
In the absence of contact between the ball and the cushion, we are in the setting of the previously
discussed case.
The impact of the ball on the cushion has also been repeatedly considered in the literature
(see, eg. \cite{Coriolis1835147}).
Below we consider the case of continuous contact of the ball with the cushion and the table,
which is expressed by constraint equations
\begin{equation} \label{eq24}
    y = R, \quad z = R.
\end{equation}

Denote the Lagrange multipliers corresponding to the contacts with the cushion and the table by
$\lambda_\ccushion$ and $\lambda_\ctable$, and the coefficients of sliding friction of the ball
and the surfaces of the cushion and the table by $\mu_\ccushion$ and $\mu_\ctable$.
For the contact with the table, formulas~\eqref{eq17}--\eqref{eq19} obtained in the previous
paragraph are transferred without changes.
For the contact with the cushion,
\[
    v_\ccushion = \begin{pmatrix}
        \dot{x} - \dot{\alpha} R - \dot{\gamma} R \cos{\beta}                          \\
        \dot{y}                                                                        \\
        \dot{z} + \dot{\beta} R \cos{\alpha} + \dot{\gamma} R \sin{\alpha} \sin{\beta}                                                                       \\
    \end{pmatrix}, \quad
    N_\ccushion = \begin{pmatrix}
        0                 \\
        \lambda_\ccushion \\
        0                 \\
    \end{pmatrix},
\]
from where and from~\eqref{eq13}
\[
    P_\ccushion = \begin{pmatrix}
        1 & 0 & 0 & - R & 0              & - R \cos{\beta}            \\
        0 & 1 & 0 & 0   & 0              & 0                          \\
        0 & 0 & 1 & 0   & R \cos{\alpha} & R \sin{\alpha} \sin{\beta} \\
    \end{pmatrix}, \quad
    Q_\ccushion = \frac{1}{m} \begin{pmatrix}
        7 / 2 & 0 & 0     \\
        0     & 1 & 0     \\
        0     & 0 & 7 / 2 \\
    \end{pmatrix}.
\]
For the sake of simplicity, we will assume the frictions of the cushion and the table to be isotropic, i.e.
\begin{equation} \label{eq25}
    \Phi_\ctable = \mu_\ctable E_3, \quad
    \Phi_\ccushion = \mu_\ccushion E_3, \qquad
    (\mu_\ctable, \mu_\ccushion > 0).
\end{equation}
Now using~\eqref{eq12}, we can evaluate generalized friction force $F$.
The Lagrangian for the problem considered differs from~\eqref{eq16} by substitution of $\lambda$ by $\lambda_\ctable$
and addition of term $\lambda_\ccushion y$.
Moving to the angular velocity vector in Lagrange equations using formulas~\eqref{eq20}
and substituting constraint equations~\eqref{eq24}, we obtain the system
\begin{equation}\begin{aligned} \label{eq26}
    m \ddot{x} & = - \frac{
        \lambda_\ctable \mu_\ctable (\dot{x} - R \, \omega_y)
    }{ \sqrt{
        R^2 \, \omega_x^2 + (\dot{x} - R \, \omega_y)^2
    } } - \frac{
        \lambda_\ccushion \mu_\ccushion (\dot{x} + R \, \omega_z)
    }{ \sqrt{
        R^2 \, \omega_x^2 + (\dot{x} + R \, \omega_z)^2
    } }, \\
    0 & = \lambda_\ccushion - \frac{
        \lambda_\ctable \mu_\ctable R \, \omega_x
    }{ \sqrt{
        R^2 \, \omega_x^2 + (\dot{x} - R \, \omega_y)^2
    } }, \\
    0 & = - m g + \lambda_\ctable + \frac{
        \lambda_\ccushion \mu_\ccushion R \, \omega_x
    }{ \sqrt{
        R^2 \, \omega_x^2 + (\dot{x} + R \, \omega_z)^2
    } }, \\
    \frac{2}{5} m R \, \dot{\omega}_x & = - R \, \omega_x \Biggl(
        \frac{
            \lambda_\ctable \mu_\ctable
        }{ \sqrt{
            R^2 \, \omega_x^2 + (\dot{x} - R \, \omega_y)^2
        } } + \frac{
            \lambda_\ccushion \mu_\ccushion
        }{ \sqrt{
            R^2 \, \omega_x^2 + (\dot{x} + R \, \omega_z)^2
        } }
    \Biggr), \\
    \frac{2}{5} m R \, \dot{\omega}_y & = \frac{
        \lambda_\ctable \mu_\ctable (\dot{x} - R \, \omega_y)
    }{ \sqrt{
        R^2 \, \omega_x^2 + (\dot{x} - R \, \omega_y)^2
    } }, \\
    \frac{2}{5} m R \, \dot{\omega}_z & = - \frac{
        \lambda_\ccushion \mu_\ccushion (\dot{x} + R \, \omega_z)
    }{ \sqrt{
        R^2 \, \omega_x^2 + (\dot{x} + R \, \omega_z)^2
    } }. \\
\end{aligned}\end{equation}
From the second and the third equations we find the Lagrange multipliers,
and hence the normal reactions of the cushion and the table:
\begin{equation}\begin{aligned} \label{eq27}
    \lambda_\ccushion & = \frac{
        m g \mu_\ctable R \, \omega_x \sqrt{ R^2 \, \omega_x^2 + (\dot{x} + R \, \omega_z)^2 }
    }{
        \mu_\ccushion \mu_\ctable R^2 \, \omega_x^2 + \sqrt{
            R^2 \, \omega_x^2 + (\dot{x} - R \, \omega_y)^2
        }\, \sqrt{
            R^2 \, \omega_x^2 + (\dot{x} + R \, \omega_z)^2
        }
    }, \\
    \lambda_\ctable & = \frac{
        m g \sqrt{
            R^2 \, \omega_x^2 + (\dot{x} - R \, \omega_y)^2
        }\, \sqrt{
            R^2 \, \omega_x^2 + (\dot{x} + R \, \omega_z)^2
        }
    }{
        \mu_\ccushion \mu_\ctable R^2 \, \omega_x^2 + \sqrt{
            R^2 \, \omega_x^2 + (\dot{x} - R \, \omega_y)^2
        }\, \sqrt{
            R^2 \, \omega_x^2 + (\dot{x} + R \, \omega_z)^2
        }
    }.
\end{aligned}\end{equation}
Note that without account for friction the expression for $\lambda_\ccushion$ would turn to $0$,
and the expression for $\lambda_\ctable$ would come down to $m g$,
which is not sufficient for the description of the motions of the ball,
observed in practice in billiards (see the next paragraph).

Stability of the contacts is determined by $\lambda_\ccushion$ and $\lambda_\ctable$:
the contact is stable when the corresponding Lagrange multiplier is positive.
For $\lambda_\ctable$, this is always true, i.e. continuous contact with the table while moving along the cushion
is never violated.
For $\lambda_\ccushion$, the requirement of positiveness is equivalent to inequality $\omega_x > 0$,
which corresponds to a rotation pressing the ball to the cushion.

Substituting \eqref{eq27} into \eqref{eq26} and performing a change of variables
\[
    \omega_x = \Omega_x, \quad
    \omega_y = \Omega_y + \frac{\dot{x}}{R}, \quad
    \omega_z = \Omega_z - \frac{\dot{x}}{R},
\]
we find the final form of the equations of motion of the ball along the cushion:
\begin{equation}\begin{aligned} \label{eq28}
    \ddot{x} & = - \frac{
        g \mu_\ctable \bigl( \mu_\ccushion \Omega_x \Omega_z - \Omega_y \sqrt{ \Omega_x^2 + \Omega_z^2 } \bigr)
    }{
        \mu_\ccushion \mu_\ctable \Omega_x^2 +
        \sqrt{\Omega_x^2 + \smash[b]{ \Omega_y^2 }}\, \sqrt{\Omega_x^2 + \Omega_z^2}
    }, \\
    \dot{\Omega}_x & = - \frac{
        5 g \mu_\ctable \Omega_x \bigl(\mu_\ccushion \Omega_x + \sqrt{\Omega_x^2 + \Omega_z^2}\bigr)
    }{  2 R \bigl(
        \mu_\ccushion \mu_\ctable \Omega_x^2 +
        \sqrt{\Omega_x^2 + \smash[b]{ \Omega_y^2 }}\, \sqrt{\Omega_x^2 + \Omega_z^2}
    \bigr) }, \\
    \dot{\Omega}_y & = \frac{
        g \mu_\ctable \bigl( 2 \mu_\ccushion \Omega_x \Omega_z - 7 \Omega_y \sqrt{\Omega_x^2 + \Omega_z^2} \bigr)
    }{ 2 R \bigl(
        \mu_\ccushion \mu_\ctable \Omega_x^2 +
        \sqrt{\Omega_x^2 + \smash[b]{ \Omega_y^2 }}\, \sqrt{\Omega_x^2 + \Omega_z^2}
    \bigr) }, \\
    \dot{\Omega}_z & = - \frac{
        g \mu_\ctable \bigl(7 \mu_\ccushion \Omega_x \Omega_z - 2 \Omega_y \sqrt{\Omega_x^2 + \Omega_z^2}\bigr)
    }{ 2 R \bigl(
        \mu_\ccushion \mu_\ctable \Omega_x^2 +
        \sqrt{\Omega_x^2 + \smash[b]{ \Omega_y^2 }}\, \sqrt{\Omega_x^2 + \Omega_z^2}
    \bigr) }, \\
\end{aligned}\end{equation}
It can be easily seen that function
\begin{equation} \label{eq29}
    J = 9 \dot{x} + 2 R (\Omega_y - \Omega_z) = 5 \dot{x} + 2 \omega_y - 2 \omega_z
\end{equation}
is an integral of this system.
Thus, the problem comes down to solving the system of the last three equations~\eqref{eq28}.
\begin{figure}[!ht]
    \centering\includegraphics{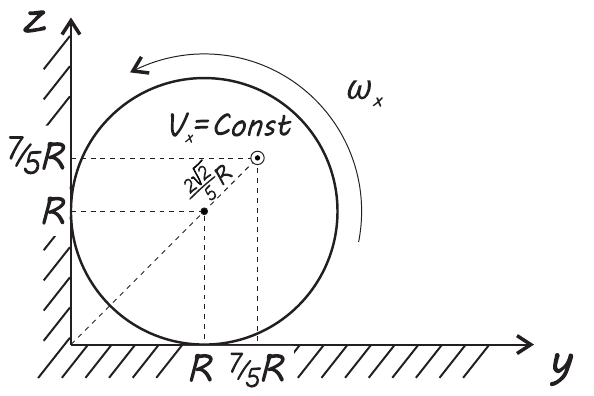}
    \caption{The upper side center of percussion}
    \label{fig3}
\end{figure}
Integral~\eqref{eq29} has the mechanical sense of the $x$-component of velocity of the upper side point
of the ball, located at distance $2 \sqrt{2} / 5$ of the radius from its center along the line,
perpendicular to the intersection of the supporting planes and passing through the center of the ball.
In this regard, this point can be referred to as
\emph{the upper side center of percussion of the ball} (see Fig.~\ref{fig3}).

\paragraph{The "Frenchman" stroke in billiards}

We will look for a solution of system~\eqref{eq28} such that
\begin{equation} \label{eq30}
    \Omega_y = \eta\, \Omega_x, \quad \Omega_z = \zeta\, \Omega_x \qquad
    (\eta, \zeta \in \mathbb{R}).
\end{equation}
Finding the time derivative of~\eqref{eq30} using~\eqref{eq28}, we arrive at the equations for $\eta$ and $\zeta$:
\[\begin{aligned}
    & \bigl(2 \sqrt{1 + \zeta^2} - 5 \mu_\ccushion\bigr) \eta - 2 \mu_\ccushion \zeta = 0, \\
    & \bigl(5 \sqrt{1 + \zeta^2} - 2 \mu_\ccushion\bigr) \zeta + 2 \eta \sqrt{1 + \zeta^2} = 0.
\end{aligned}\]
These equations have the following solutions:
\begin{enumerate}[1.]
    \item \label{sol1}
    $\eta = 0$, $\zeta = 0$.
    This case corresponds to rolling of the ball along the cushion without sliding
    at both points of contact.
    \item \label{sol2}
    $\eta = \pm 2 \sqrt{4 \mu_\ccushion^2 - 1}$, $\zeta = \mp \sqrt{4 \mu_\ccushion^2 - 1}$.
    In order for these solutions to correspond to a real motion,
    the friction coefficient of the ball and the cushion must satisfy inequality $\mu_\ccushion \ge 0.5$.
    Herewith, the direction of vector $\Omega = (\Omega_x, \Omega_y, \Omega_z)^T$ does not change until
    the moment $T > 0$, when the spin (called \emph{english} in billiards) pressing the ball
    to the cushion disappears due to friction and the ball goes over to the rolling regime
    described previously.
    \item \label{sol3}
    $\eta = \pm \frac{1}{4} \sqrt{\mu_\ccushion^2 - 4}$, $\zeta = \mp \frac{1}{2} \sqrt{\mu_\ccushion^2 - 4}$.
    In order for these solutions to correspond to a real motion,
    the friction coefficient of the ball and the cushion must satisfy inequality $\mu_\ccushion \ge 2$.
    In practice, such large friction coefficients do not arise, at least in billiards,
    thence we will not stop on consideration of this case.
\end{enumerate}
Write down explicit dependencies of the velocities on time for solution~\ref{sol2}:
\begin{equation}\label{eq31}
    \begin{aligned}
        \dot{x}(t) & = v_0 \pm \frac{
            5 \mu_\ctable g t \sqrt{4 \mu_\ccushion^2 - 1}
        }{
            2 \sqrt{16 \mu_\ccushion^2 - 3} + \mu_\ctable
        }, \\
        \omega_x(t) & = \omega_{x, 0} - \frac{
            15 \mu_\ctable g t
        }{
            2 R \Bigl( 2 \sqrt{16 \mu_\ccushion^2 - 3} + \mu_\ctable \Bigr)
        }, \\
        \omega_y(t) & = \frac{v_0}{R} \pm 2 \Biggl(
            \omega_{x, 0} - \frac{
                5 \mu_\ctable g t
            }{
                R \Bigl( 2 \sqrt{16 \mu_\ccushion^2 - 3} + \mu_\ctable \Bigr)
            }
        \Biggr) \sqrt{4 \mu_\ccushion^2 - 1}, \\
        \omega_z(t) & = - \frac{v_0}{R} \mp \Biggl(
            \omega_{x, 0} - \frac{
                5 \mu_\ctable g t
            }{
                2 R \Bigl( 2 \sqrt{16 \mu_\ccushion^2 - 3} + \mu_\ctable \Bigr)
            }
        \Biggr) \sqrt{4 \mu_\ccushion^2 - 1}, \\
    \end{aligned}
\end{equation}
for $t \in [0, T)$, and
\[\begin{aligned}
    \dot{x}(t) & = v_0 \pm \frac{2}{3}\, R\, \omega_{x, 0} \sqrt{4 \mu_\ccushion^2 - 1}, \\
    \omega_x(t) & = 0, \\
    \omega_y(t) & = \frac{v_0}{R} \pm \frac{2}{3}\, \omega_{x, 0} \sqrt{4 \mu_\ccushion^2 - 1}, \\
    \omega_z(t) & = - \frac{v_0}{R} \mp \frac{2}{3}\, \omega_{x, 0} \sqrt{4 \mu_\ccushion^2 - 1}, \\
\end{aligned}\]
for $t \in [T, +\infty)$, where $v_0$ and $\omega_{x, 0}$ are the initial values of the longitudinal velocity
of the ball center and side spin, and
\[
    T = \frac{
        2 \omega_{x, 0} R \Bigl( \mu_\ctable + 2 \sqrt{16 \mu_\ccushion^2 - 3} \Bigr)
    }{
        15 \mu_\ctable g
    }.
\]
Initial values $\omega_{y, 0}$ and $\omega_{z, 0}$ are obtained by substituting $t = 0$ into~\eqref{eq31}:
\[
    \omega_{y, 0} = \frac{v_0}{R} \pm 2 \omega_{x, 0} \sqrt{4 \mu_\ccushion^2 - 1}, \quad
    \omega_{z, 0} = - \frac{v_0}{R} \mp \omega_{x, 0} \sqrt{4 \mu_\ccushion^2 - 1}.
\]
The equations obtained describe one of the most recognizable techniques in Russian pyramid,
called \emph{the "Frenchman"}.
A similar, but slightly different in execution, stroke occurs in artistic pool and is
sometimes called \emph{"Rocket masse"}.

\section{Tensors of dry friction in multi-contact case} \label{sec5}

In this section we suggest a multi-contact generalization of condition~\ref{cond1} alternative to~\eqref{eq23},
that is appropriate for the description of motions like in the previous example.

Consider a rigid body touching the support at $p$ points of contact according to constraint equations~\eqref{eq22}.
Denote by $v_i$, $i = 1, \ldots, p$,
the velocity of the body at the point of contact with the $i$-th supporting surface $f_i(x) = 0$,
by $P_i: \mathbb{R}^n \to \mathbb{R}^r$, $r \le n$,
the operator that maps generalized velocity $v$ to $v_i$,
by $N_i$ the normal support reaction of the $i$-th surface,
and by $\Phi_i$ the tensor of dry friction at the $i$-th contact point.
Also introduce covectors $n_i$ such that $N_i = \lambda_i n_i$, or equivalently
\[
    P_i^T n_i = \frac{\partial f_i}{\partial x}.
\]
Then according to~\eqref{eq12}, the generalized dry friction force takes the form
\[
    F = - \sum_{j = 1}^p \lambda_j |n_j| \frac{P_j^T \Phi_j v_j}{|v_j|}.
\]
Following V.~V.~Kozlov's considerations for the case of a single point of contact
(see~\cite[formulas~(2.10)--(2.13)]{Kozlov2010855}),
rewrite the Lagrange equations as
\[
    \ddot{x} = A^{-1} X + \sum_{j = 1}^p \lambda_j A^{-1} P_j^T \biggl(
        n_j - |n_j| \frac{\Phi_j v_j}{|v_j|}
    \biggr),
\]
where $X$ is a known function of $\dot{x}$ and $x$.
Now, multiplying scalarly this equation by $\partial f_i / \partial x = P_i^T n_i$, we get
a system on $\lambda_i$:
\begin{equation} \label{eq32}
    \sum_{j = 1}^p \lambda_j \biggl( P_i^T n_i, A^{-1} P_j^T \biggl[
        n_j - |n_j| \frac{\Phi_j v_j}{|v_j|}
    \biggr] \biggr)
    = Z_i - \biggl( P_i^T n_i, A^{-1} X \biggr),
\end{equation}
where $Z_i$ are also known functions of $\dot{x}$ and $x$,
\[
    Z_i = \biggl( \frac{\partial f_i}{\partial x}, \ddot{x} \biggr)
    = - \biggl( \frac{d}{dt} \frac{\partial f_i}{\partial x}, \dot{x} \biggr),
\]
and the right-hand side of~\eqref{eq32} does not contain $\lambda_j$.
Thus, the absence of Painleve paradoxes in the Lagrange equations is equivalent to unique solvability
of~\eqref{eq32} with respect to $\lambda_i$.

\begin{definition} \label{df1}
    We call a set of dry friction tensors $\Phi_i$, $i = 1, \ldots, p$ \emph{regular} if
    for any $v$ satisfying constraint equations
    \[
        \biggl( \frac{\partial f_1}{\partial x}, v \biggr) = \cdots =
        \biggl( \frac{\partial f_p}{\partial x}, v \biggr) = 0,
    \]
    and for any $k > 0$
    \begin{equation} \label{eq33}
        \det \Bigl\{ \Bigl( n_i, P_i A^{-1} P_j^T \Bigl[
            n_j - k \Phi_j v_j
        \Bigr] \Bigr) \Bigr\} > 0.
    \end{equation}
\end{definition}
Condition~\eqref{eq33} expresses a natural property of regular dry friction tensors
that scale conversions of the friction coefficients should not lead to Painleve paradoxes.
In particular, assuming parameter $k$ sufficiently large and leaving the highest terms in $k$ only,
we get a simple necessary condition of regularity:
\[
    \det \Bigl\{ \Bigl( n_i, P_i A^{-1} P_j^T \Phi_j v_j \Bigr) \Bigr\} \ge 0,
\]
where in case of odd $p$ the inequality turns to equality.

\begin{example} \label{ex1}
    In case of a single point of contact, \eqref{eq33} takes the form
    \[
        \Bigl( n_c, P A^{-1} P^T \Phi_c v_c \Bigr) = 0,
    \]
    which is equivalent to condition~\ref{cond1}.
    Consequently, tensors $\Phi_c$ from \eqref{eq14} for the Painleve problem
    and \eqref{eq19} for the problem of a ball on a plane
    are regular.
\end{example}

\begin{example} \label{ex2}
    Let for the problem of a ball moving along the cushion
    \[
        \Phi_\ccushion = \begin{pmatrix}
            \varkappa_1 & \varkappa_2 & \varkappa_3 \\
            0           & \mu_2       & 0           \\
            0           & \nu_2       & \nu_3       \\
        \end{pmatrix},
        \quad
        \Phi_\ctable = \begin{pmatrix}
            \varkappa_4 & \varkappa_5 & \varkappa_6 \\
            0           & \mu_5       & \mu_6       \\
            0           & 0           & \nu_6       \\
        \end{pmatrix},
    \]
    where $\varkappa_{1, 4}, \mu_{2, 5}, \nu_{3, 6} \ge 0$.
    Then~\eqref{eq33} turns into
    \[
        \frac{
            1 + k^2 R^2 \mu_5 \nu_3 \omega_x^2
        }{
            m^2
        } > 0,
    \]
    which is always true.
    In particular, isotropic tensors $\Phi_\ctable$ and $\Phi_\ccushion$ from~\eqref{eq25} are regular.
\end{example}

\bibliography{preprint}

\begin{thebibliography}{1}
\expandafter\ifx\csname url\endcsname\relax
  \def\url#1{\texttt{#1}}\fi
\expandafter\ifx\csname urlprefix\endcsname\relax\def\urlprefix{URL }\fi
\expandafter\ifx\csname href\endcsname\relax
  \def\href#1#2{#2} \def\path#1{#1}\fi

\bibitem{Kozlov2010855}
V.~V. Kozlov, Lagrangian mechanics and dry friction, Rus. J. Nonlin. Dyn. 6~(4)
  (2010) 855--868, in Russian.
\newblock \href {https://doi.org/10.20537/nd1004009}
  {\path{doi:10.20537/nd1004009}}.

\bibitem{Zhuravlev2011147}
V.~F. Zhuravlev, Comments on ''{Lagrangian mechanics and dry friction}'' by {V.
  V. Kozlov}, Rus. J. Nonlin. Dyn. 7~(1) (2011) 147--149, in Russian.
\newblock \href {https://doi.org/10.20537/nd1101008}
  {\path{doi:10.20537/nd1101008}}.

\bibitem{Painleve1895141}
P.~Painlev\'{e}, Le\c{c}ons sur le frottement, A. Hermann, Paris, 1895, in
  French.
  \href{https://archive.org/details/leonssurlefrott00paingoog}{\texttt{ark:/13960/t44q8b537}}.

\bibitem{Coriolis1835147}
G.-G. Coriolis, Th\'{e}orie math\'{e}matique des effets du jeu de billard,
  Carilian-Goeury, Paris, 1835, in French.
  \href{https://gallica.bnf.fr/ark:/12148/bpt6k29318f}{\texttt{ark:/12148/bpt6k29318f}}.

\end{thebibliography}

\end{document}